\newtheorem{theorem}{Theorem}[section]
\newtheorem{proposition}[theorem]{Proposition}
\newtheorem{lemma}[theorem]{Lemma}
\theoremstyle{definition}
\newtheorem*{definition*}{Definition}
\theoremstyle{remark}
\newtheorem{remark}[theorem]{Remark}
\numberwithin{equation}{section}
\newcommand{\de}{\delta}
\newcommand{\ep}{\varepsilon}
\newcommand{\ka}{\kappa}
\newcommand{\la}{\lambda}
\newcommand{\om}{\omega}
\newcommand{\De}{\Delta}
\newcommand{\La}{\Lambda}
\newcommand{\Om}{\Omega}
\newcommand{\tF}{\widetilde{F}}
\newcommand{\ha}{\widehat{a}}
\newcommand{\hb}{\widehat{b}}
\newcommand{\hF}{\widehat{F}}
\newcommand{\hL}{\widehat{L}}
\def\RR{\mathbb{R}}
\def\BB{\mathbb{B}}
\def\PP{\mathbb{P}_{p,\bla,\ep B}}
\def\Lae{\La_{p,\bla,\ep B}}
\def\Laz{\La_0}
\def\Laez{\La_{p,\bla,0}}
\renewcommand\SS{\mathbb{S}}
\newcommand{\cE}{{\mathcal E}}
\newcommand{\cF}{{\mathcal F}}
\newcommand{\cG}{{\mathcal G}}
\newcommand{\cH}{{\mathcal H}}
\newcommand{\cK}{{\mathcal K}}
\newcommand{\cP}{{\mathcal P}}
\newcommand{\cX}{{\mathcal X}}
\newcommand{\pd}{\partial}
\newcommand\minus\backslash
\newcommand\lan\langle
\newcommand\ran\rangle
\renewcommand\leq\leqslant
\renewcommand\geq\geqslant
\newcommand\Dir{_{\mathrm{Dir}}}
\newlength{\intwidth}
\newcommand\Be{B_{\ep,p,\bla}}
\newcommand\cGp{\cG_{p,\bla}}
\newcommand\cFp{\cF_{p,\bla}}
\newcommand\ueB{u_{p,\ep,\bla,B}}
\newcommand\ueBe{u_{p,\ep,\bla,\Be}}
\newcommand\Hp{H_{p,\bla}}
\newcommand\Tp{T_{p,\bla}}
\newcommand\Ome{\Om_{p,\ep B}}
\newcommand\bc{{\bar c}}
\newcommand\bla{{\bar\la}}
\newcommand\tb{\tilde b}
\newcommand\tf{\tilde f}
\newcommand\tL{\widetilde L}
\newcommand\chip{\chi_{p,\ep B}}
\newcommand\cHp{\cH_{p,\bla,B}}
\newcommand\Wp{W_{p,\bla}}
\newcommand\Vp{V_{p,\bla}}
\newcommand\Ye{Y_{\ep,\bla}}
\newcommand\pe{p_{\ep,\bla}}
\newcommand\phip{\phi_{p,\bla}}
\begin{document}

\title[Overdetermined boundary problems with
nonconstant data]{Overdetermined boundary problems with
nonconstant Dirichlet and Neumann data}

   % author one information
\author{Miguel Dom\'{\i}nguez-V\'{a}zquez}
\address{Departamento de Matem\'aticas, Universidade de Santiago de
  Compostela, Spain.}

\email{miguel.dominguez@usc.es}
   % author two information
\author{Alberto Enciso}
 \address{Instituto de Ciencias Matem\'aticas, Consejo Superior de
 	Investigaciones Cient\'ificas, Madrid, Spain.}
\email{aenciso@icmat.es}
% \email{aenciso@icmat.es}
   % author two information
\author{Daniel Peralta-Salas}
\address{Instituto de Ciencias Matem\'aticas, Consejo Superior de
 	Investigaciones Cient\'ificas, Madrid, Spain.}
\email{dperalta@icmat.es}

%%    General info
%\subjclass[2010]{35B38, 58J05, 58K45}
%\date{\today}
%
%\keywords{ }
%
\begin{abstract}
  In this paper we consider the overdetermined boundary problem for a
  general second order semilinear elliptic equation on bounded domains
  of~$\RR^n$, where one prescribes both the Dirichlet and Neumann data
  of the solution. We are interested in the case where the data are
  not necessarily constant and where the coefficients of the equation
  can depend on the position, so that the overdetermined problem does
  not generally admit a radial solution. Our main result is that, nevertheless,
  under minor technical hypotheses nontrivial solutions to the
  overdetermined boundary problem always exist.
\end{abstract}
\maketitle

\section{Introduction}

The study of overdetermined boundary problems, that is, problems where
one prescribes both Dirichlet and Neumann data, has grown into a major
field of research in the theory of elliptic PDEs since its appearance
in Lord Rayleigh's classic treatise~\cite{Rayleigh}. An outburst of
activity started with a groundbreaking paper of Serrin~\cite{Serrin},
where he combined an adaptation of Alexandrov's moving planes method with a
subtle refinement of the maximum principle to prove a symmetry result
for an overdetermined problem. More precisely, Serrin proved that, under mild technical
hypotheses, nontrivial
solutions to elliptic equations of the form
\[
\De u + F(u)=0
\]
inside a bounded domain~$\Om\subset\RR^n$ satisfying the boundary
conditions
\begin{equation}\label{standard}
u=0\quad\text{and}\quad \pd_\nu u=-c\qquad \text{on } \pd\Om\,,
\end{equation}
where $c$ is an unspecified constant that can be picked freely, only exist if~$\Om$ is a ball, in which case~$u$ is radial. The result remains true if~$F$ also depends on the
norm of the gradient of~$u$ and if we replace the Laplacian by other position-independent operators of variational form~\cite{Salani}.

The influence of Serrin's result is such that the very considerable
body of literature devoted to overdetermined boundary problems is
mostly limited to proofs that solutions need to be radial in cases
that can be handled using the method of moving planes. Without
attempting to be comprehensive, some remarkable
results about overdetermined boundary value problems include
alternative approaches to radial symmetry results using
$P$-functions (see e.g.\ the review~\cite{Kawohl}) or Pohozaev-type integral identities~\cite{Brandolini,MP1,MP2}, and extensions of the moving
plane method to the hyperbolic space and the hemisphere~\cite{KP98},
to degenerate elliptic equations such as the $p$-Laplace
equation~\cite{DPR99}, and to exterior~\cite{AB}, unbounded~\cite{Farina} or non-smooth
domains~\cite{Pr98}. Another
direction of research that has attracted considerable recent attention
is the study of connections with the theory of constant mean curvature
surfaces and the construction of nontrivial solutions to Serrin-type
problems in exterior domains~\cite{Traizet,DP,Ros1}. Nontrivial
solutions for partially overdetermined problems or with degenerate ellipticity are also known to exist~\cite{Fragala1,Fragala2}.

In two surprising papers, Pacard and Sicbaldi~\cite{PS:AIF}
and Delay and Sicbaldi~\cite{DS:DCDS} proved the existence of extremal
domains with small volume for the first eigenvalue of the
Laplacian in any compact Riemannian manifold, which guarantees the
existence of solutions to the overdetermined problem
for the linear elliptic equation
\[
  \De_g u +\la u=0
\]
in a domain with both zero Dirichlet data and constant
Neumann data. Here $\De_g$ is the Laplacian operator associated with a
Riemannian metric~$g$ on a compact
manifold and the constant~$\la$ (which one eventually chooses as the
first Dirichlet eigenvalue of the domain~$\Om$) is not specified a priori. Very recently we managed to show the existence of
nontrivial solutions, with the same overdetermined Dirichlet and Neumann conditions, for
fairly general semilinear elliptic equations of second order with possibly nonconstant
coefficients~\cite{overdet}.

In all these results, the fact that one is imposing precisely the standard overdetermined
boundary conditions~\eqref{standard} plays a crucial role. Roughly
speaking, this is because one can relate the existence of overdetermined
solutions with the critical points of certain functional via a
variational argument. Therefore,
the gist of the argument in these papers is that the overdetermined condition with
constant data is connected with the local extrema for a natural energy
functional, restricted to a specific class of functions labeled by
points in the physical space. This
ultimately permits to derive the existence of solutions from the fact
that a continuous function attains its maximum on a compact
manifold. However, this strategy is successful only for constant boundary data, and we are not
aware of an analog of this connection for general boundary data.

In the recent paper~\cite{Euler}, we have constructed new families of
compactly supported stationary solutions to the 3D Euler equation by
proving that there are solutions to an associated overdetermined problem in two
dimensions where one prescribes (modulo constants that
can be picked freely) zero Dirichlet data and nonconstant Neumann
data. The proof uses crucially that the space is
two-dimensional, which ensures that the kernel and cokernel of a
certain operator are one-dimensional, and does not work in higher
dimensions.

Our objective in this paper is to prove the existence of solutions to
overdetermined problems where one prescribes general Dirichlet and
Neumann data (just as before, up to unspecified constants). For
concreteness, we consider the model semilinear equation
\begin{equation}\label{eq}
  L u +\la F(x,u)=0
\end{equation}
in a bounded domain $\Om\subset \RR^n$, with Dirichlet and Neumann
boundary conditions
\begin{equation}\label{BCs}
  u= f_0(x)\,,\quad \nu\cdot A(x)\nabla u= -cf_1(x)\qquad \text{on }\pd\Om\,.
\end{equation}
Here $f_0,f_1$ are functions on~$\RR^n$, $F$ is a function
on~$\RR^n\times\RR$, $\la,c$ are
unspecified positive constants, $\nu$ is the outwards unit normal on $\partial\Omega$ and $L$ is the second-order
operator
\[
Lu:= a_{ij}(x) \,\pd_{ij} u + b_i(x)\, \pd_i u\,,
\]
where $A(x)=(a_{ij}(x))$ is a (symmetric)
matrix-valued function on~$\RR^n$ satisfying the (possibly
non-uniform) ellipticity
condition
\[
\min_{|\xi|=1}\xi\cdot A(x)\xi>0 \quad \text{for all }x\in\RR^n\,.
\]

\begin{theorem}\label{T.1}
Given any non-integer $s>2$, let us take any functions $F,f_0,f_1,b$ of
class~$C^{s}$ and $A$ of class $C^{s+2}$. Assume that the functions~$F(\cdot,f_0(\cdot))$ and~$f_1$ are positive and
that the function~$ f_0$ has a nondegenerate
critical point. Then there is a family of
domains~$\Om_{\ep,\bla}$ for which the overdetermined
problem~\eqref{eq}-\eqref{BCs} admits a solution.

More precisely, let $p\in\RR^n$ be a nondegenerate critical point
of~$f_0$. Then, for any small enough~$\ep\neq0$ and $\bla>0$, the following
statements hold:
\begin{enumerate}
\item The domain~$\Om_{\ep,\bla}$ is a small deformation of the ball of
  radius~$\ep$ centered at~$p$, characterized by an equation of the
  form $|x-p|^2<\ep^2+O(\ep^3)$.

  \item The dependence of $\la$ and~$c$ on the parameter~$\ep$ is of
    the form
    \[
      \la =
      \ep^{-2}\bla \,,\qquad c= \ep^{-1}\bc\,,
    \]
    where $\bc=\bc(\ep,\bla)$ is a positive constant of order~1.
\end{enumerate}
\end{theorem}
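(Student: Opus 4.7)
\emph{Strategy.} We perform a singular-perturbation Lyapunov--Schmidt reduction on small balls at $p$: the nondegenerate critical point of $f_0$ is what ultimately makes the resulting finite-dimensional bifurcation equation solvable. After a linear change of coordinates we may assume $A(p)=I$. Rescale via $y=(x-p)/\ep$ and set $\la=\ep^{-2}\bla$, $c=\ep^{-1}\bc$; the equation and boundary conditions become
\[
a_{ij}(p+\ep y)\,\pd_{ij}v+\ep\,b_i(p+\ep y)\,\pd_i v+\bla F(p+\ep y,v)=0\quad\text{in }\tOm,
\]
with $v=f_0(p+\ep y)$ and $\nu\cdot A(p+\ep y)\nabla v=-\bc f_1(p+\ep y)$ on $\pd\tOm$. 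At $\ep=0$ this becomes the overdetermined problem $\De v+\bla F(p,v)=0$ on $B_1$, whose unique radial solution $v_0=V(|y|)$ determines a unique $\bc_0=-V'(1)/f_1(p)>0$ (note $V'(1)<0$ since $F(p,f_0(p))>0$) realizing the Neumann condition. Parametrize nearby rescaled domains by $\xi\in C^{s-1}(\SS^{n-1})$ via $\tOm_\xi=\{r\omega:0\le r<1+\xi(\omega)\}$, solve the interior Dirichlet problem on $\tOm_\xi$ with data $v=f_0(p+\ep y)$, and define $\Phi(\xi,\bc,\ep)\in C^{s-2}(\SS^{n-1})$ as the Neumann residual pulled back to $\SS^{n-1}$ by the radial identification $\pd\tOm_\xi\simeq\SS^{n-1}$. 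Then $\Phi(0,\bc_0,0)=0$ and zeros of $\Phi$ correspond to solutions of the overdetermined problem.

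\emph{Kernel analysis and Lyapunov--Schmidt.} A direct linearization gives $\cL:=D_\xi\Phi(0,\bc_0,0)=-V'(1)\La+V''(1)I$, where $\La$ is the Dirichlet-to-Neumann operator of the linearized Schr\"odinger equation $\De w+\bla\,\pd_u F(p,v_0)\,w=0$ on $B_1$, so $\cL$ is a self-adjoint first-order elliptic pseudodifferential operator on $\SS^{n-1}$, Fredholm of index zero. Translation invariance of the $\ep=0$ problem places the $n$-dimensional space $K=\{\omega\mapsto q\cdot\omega:q\in\RR^n\}$ inside $\ker\cL$; conversely, separation of variables shows $\La Y_k=\mu_k Y_k$ for each spherical-harmonic degree $k$, that the radial profile $R_1=V'/V'(1)$ already solves the $k=1$ radial ODE and yields $\mu_1=V''(1)/V'(1)$, and that $\mu_k\neq\mu_1$ for $k\neq 1$ when $\bla$ is small; hence $\ker\cL=K$ and, by self-adjointness, $\operatorname{coker}\cL=K$ as well. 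Since $D_\bc\Phi(0,\bc_0,0)=f_1(p)$ is a nonzero constant lying in $K^\perp$, the implicit function theorem on the $K^\perp$-part of $\Phi$ produces, for each small $(q,\bc,\ep)\in\RR^n\times\RR\times\RR$, a unique $\xi_\perp(q,\bc,\ep)\in K^\perp$ killing that component of the equation, and we are left with the $n$-dimensional bifurcation equation
\[
F(q,\bc,\ep):=\Pi\,\Phi\bigl(q\cdot\omega+\xi_\perp(q,\bc,\ep),\bc,\ep\bigr)=0,
\]
where $\Pi$ denotes the $L^2(\SS^{n-1})$-projection onto $K$.

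\emph{Bifurcation equation and nondegenerate critical point.} Every translate of the base ball solves the $\ep=0$ problem, so $F(q,\bc,0)\equiv 0$ and $F$ is divisible by $\ep$; because $|x-p|^2<\ep^2+O(\ep^3)$ forces $\xi=O(\ep)$, write $q=\ep q'$. A careful expansion yields
\[
\ep^{-2}F(\ep q',\bc,\ep)\big|_{\ep=0}=\al\,D^2 f_0(p)\,q'+R(\bc)
\]
for some nonzero constant $\al$ and smooth $R:\RR\to\RR^n$. The Hessian enters through the Taylor expansion of the Dirichlet datum,
\[
f_0(p+\ep(q'+\tilde y))=f_0(p)+\tfrac{\ep^2}{2}(q'+\tilde y)\cdot D^2 f_0(p)(q'+\tilde y)+O(\ep^3),
\]
whose $O(\ep)$ term is killed by the hypothesis $\nabla f_0(p)=0$, while the linear-in-$\tilde y$ part $\ep^2(D^2 f_0(p)q')\cdot\tilde y$ feeds a dipole contribution into $K$ via the inner Dirichlet solver; all other $O(\ep)$-contributions from $\nabla A(p)$, $b(p)$, $\nabla_x F(p,v_0)$ and $\nabla f_1(p)$ project trivially onto $K$ at this order, as one verifies using the radial symmetry of $v_0$ together with spherical-harmonics decomposition or Pohozaev-type identities on $B_1$. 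Nondegeneracy of $D^2 f_0(p)$ then makes $\al D^2 f_0(p)$ invertible, and the implicit function theorem applied to $\ep^{-2}F=0$ produces smooth $q'(\ep,\bla)$ and $\bc(\ep,\bla)$. The domain $\Om_{\ep,\bla}$ assembled from $(q,\bc,\xi_\perp)$ satisfies~(i) since $\xi=O(\ep)$ in the rescaled picture, and (ii) is built into the scaling.

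\emph{Main obstacle.} The substantive step is the identification of $D^2 f_0(p)$ as the matrix controlling the bifurcation equation: one must track every $O(\ep)$ contribution from the inhomogeneous data $a_{ij}$, $b_i$, $F$, $f_1$ and verify that their $K$-projections vanish at leading order, leaving the Hessian-of-$f_0$ term as the sole surviving contribution. The radial symmetry of the base solution, decomposition of boundary data into spherical harmonics on $B_1$, and Pohozaev-type integration-by-parts identities are the main tools for this reduction; once it is in place, nondegeneracy closes the implicit function theorem argument, and standard Schauder estimates furnish the regularity claimed in the theorem.
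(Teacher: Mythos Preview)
Your overall strategy—Lyapunov–Schmidt reduction with kernel $K$ equal to the degree-one spherical harmonics—is the same as the paper's. The paper organizes it slightly differently: rather than fixing the center at the critical point $p$ and using the $K$-component $q$ of the boundary deformation as the finite-dimensional variable, it keeps the deformation $B$ in $K^\perp$ and lets the physical center $p$ vary. After solving the $K^\perp$ part, the residual $K$-projection becomes a vector field $Y_{\ep,\bla}(p)$ on $\RR^n$, and one looks for a zero of $Y$ near the critical point.

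The genuine gap in your argument is the assertion that ``all other $O(\ep)$-contributions from $\nabla A(p)$, $b(p)$, $\nabla_x F(p,v_0)$ and $\nabla f_1(p)$ project trivially onto $K$ at this order.'' This is false. For instance, the Neumann target $-\bc\,f_1(p+\ep y)$ expands on $\pd B_1$ as $-\bc f_1(p)-\ep\,\bc\,\nabla f_1(p)\cdot\om+O(\ep^2)$, and the term $\nabla f_1(p)\cdot\om$ lies \emph{exactly} in $K$. The paper computes these projections explicitly (Propositions~\ref{P.Neumann} and~\ref{P.cFp}, Lemma~\ref{L.asymptY}): at the center $p$, the $K$-projection of the Neumann residual at order~$\ep$ is
\[
\nabla f_0(p)-\frac{\phip'(1)}{f_1(p)}\nabla f_1(p)+\Vp,
\]
where $\Vp$ collects the contributions of $b(p)$ and $\nabla_x F(p,\cdot)$ and is generically nonzero. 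At the critical point $p^*$ the first term vanishes, but the remaining two do not; no Pohozaev identity makes them disappear. These terms are of order $\ep$ in your bifurcation function $F$, whereas the $D^2 f_0(p)\,q'$ contribution you identify (coming from the $O(\ep^2)$ Taylor term of the Dirichlet datum) is of order $\ep^2$. Hence your scaling $q=\ep q'$ and the limit $\ep^{-2}F(\ep q',\bc,\ep)|_{\ep=0}$ are not well defined: the order-$\ep$ obstruction is a nonzero $n$-vector and cannot be absorbed into a one-parameter $R(\bc)$, since varying $\bc$ moves it only along the single direction $\nabla f_1(p)$.

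The idea you are missing is that these extra order-$\ep$ contributions are all $O(\bla)$ (because $\phip'(1)=O(\bla)$ and $|\Vp|\le C\bla$), so that for small $\bla$ the full order-$\ep$ vector is $\nabla f_0(p)+O(\bla)$. In the paper's parametrization this is solved for $p$ near $p^*$ via the implicit function theorem, using the nondegeneracy of $D^2 f_0(p^*)$. In your parametrization you would need $q$ of size $O(1)$ (not $O(\ep)$), so that translating the rescaled ball by $q$ moves the physical center to $p^*+\ep q$ and produces $\nabla f_0(p^*+\ep q)=\ep\,D^2 f_0(p^*)\,q+O(\ep^2)$ at the \emph{same} order $\ep$ as the $O(\bla)$ terms. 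Either way, the smallness of $\bla$ is essential and cannot be replaced by the vanishing you claim.
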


\begin{remark}\label{R.Gka}
In the case of the torsion problem, i.e., $\Delta u+\la=0$ (i.e., $F(x,u)=1$ in the previous notation),
the condition that $f_0$ has a critical point can be relaxed:
it is enough that the function $G_\ka:= f_0+\ka \log f_1$ has at least one nondegenerate
critical point for some constant~$\ka>0$. The statement then applies
if~$p$ is a nondegenerate critical point of~$G_\ka$ and taking $\bla:=
n\ka >0$ (not necessarily small).

Also, it is easy to obtain different variations on our main theorem
following the same method of proof. In fact, one obtains new results even for the linear
equation $\Delta u+b(x)\cdot \nabla u+\la f(x)=0$ with standard
overdetermined boundary data $f_0:=0$, $f_1:=1$; specifically, if $p$ is a nondegenerate
zero of the vector field~$n\nabla f-fb$, then the statement still holds taking any $\bla>0$. This does not
follow from~\cite{overdet}. However, we shall not
pursue these generalizations here.
\end{remark}

Compared with~\cite{overdet}, a major difference is that the theorem
does not only ensure the existence of domains where the overdetermined
problem under consideration admits nontrivial solution, but also
specifies the points around which those domains are located. This
immediately permits to translate this existence result to problems
that are only defined in a subset of~$\RR^n$ or on a differentiable manifold.

The paper is organized as follows. We will start by setting up the
problem in Section~\ref{S.setting}. For clarity of exposition, in Sections~\ref{S.setting} to~\ref{S.T2} we have
chosen to assume that the matrix $A(x)$ is the identity and carry out
the proof in this context. An essential ingredient of the proof is the
computation of asymptotic expansions for the solution to the Dirichlet
problem in small perturbations of a ball of radius~$\ep\ll1$, when the
constants~$\la$ and~$c$ scale with the radius as in
Theorem~\ref{T.1}. This computation is carried out in
Section~\ref{S.Asympt}. These asymptotic estimates are put to use in
Section~\ref{S.T2}, where we prove Theorem~\ref{T.1} in the particular
case when $A(x)=I$. To obtain the general result, in Section~\ref{S.A}
we show that the case of a general matrix-valued function~$A(x)$
reduces to the study of the easiest case $A(x)=I$ subject to an
inessential perturbation of order~$\ep^2$. Making this precise,
however, involves using a heavier notation and geodesic-type normal
coordinates adapted to the matrix~$A(x)$ that might unnecessarily obscure the simple ideas
the proof is based on. As a side remark, let us point out that the
reason we ask for more regularity of the matrix~$A$ (which is of class
$C^{s+2}$ in contrast with the $C^s$ regularity of the other
functions) is precisely due to our use of geodesic coordinates.

\section{Setting up the problem}
\label{S.setting}

For clarity of exposition, until Section~\ref{S.A} we will assume that $A(x)=I$.
This assumption will enable us to
obtain more compact expressions for the various quantities that appear
in the problem and it will make it easier to point out the salient
features of the proof.

Let us fix a point $p\in\RR^n$ and introduce rescaled
coordinates $z\in\RR^n$ centered at~$p$ as
\[
z:=\frac{x-p}{\ep}\,,
\]
where $\ep$ is a suitably small nonzero constant. We now consider
spherical coordinates $(r,\om)\in\RR^+\times\SS $ for~$z$, defined as
\[
r:= |z|= \left|\frac{x-p}{\ep}\right|\,,\qquad \om:= \frac z{|z|}=\frac{x-p}{|x-p|}\,.
\]
Here and in what follows,
\[
\SS:=\{\om\in\RR^n:|\om|=1\}
\]
denotes the unit sphere of dimension $n-1$.
For simplicity of notation, we will notationally omit the dependence
on the point~$p$. Also, with some abuse of notation, we will denote the expression of the
function~$u(x)$ in these coordinates simply by $u(r,\om)$.

Let us now consider a $C^{s+1}$~function $B:\SS \to\RR$ and, for
suitably small~$\ep$, let us describe the domain in terms of the above
coordinates as
\begin{equation}\label{Ome}
\Ome:=\{ r<1+\ep B(\om)\}\,.
\end{equation}
We now consider Equation~\eqref{eq} in the domain~$\Ome$ and
choose the constants $\la,c$ as
\[
\la=: \ep^{-2}\bla\,,\qquad c=:\ep^{-1} \bc\,,
\]
where we think of~$\ep$ as a small constant and of $\bla,\bc$ as
positive constants of order~1. Equation~\eqref{eq} can then be rewritten in the rescaled coordinates as
\begin{equation}\label{eqz}
\tL u + \bla \tF(z,u) =0\,,
\end{equation}
where
\[
  \tF(z,u):= F(p+\ep z,u)
\]
and $\tL$ is the differential operator
\[
  \tL u= \De u +\ep \tb(z)\cdot\nabla u\,,
\]
with $\tb_i(z):= b_i(p+\ep
z)$. We also denote the functions $f_0$ and $f_1$ in these coordinates as
\[
\widetilde f_0(z):=f_0(p+\ep z)\,, \qquad \widetilde f_1(z):=f_1(p+\ep z)\,.
\]
Here and in what follows, $\De$ and~$\nabla$ denote the Laplacian and
gradient operators in the rescaled coordinates~$z$.

The Dirichlet boundary condition on~$\pd\Ome$ can be
simply written in rescaled hyperspherical coordinates as
\begin{equation}\label{DBC}
u(1+\ep B(\om),\om)=\widetilde f_0(1+\ep B(\om),\om)=:\widehat f_0(\ep,\om)\,.
\end{equation}
We notice that $\widehat f_0(0,\om)=f_0(p)$. Analogously, the Neumann boundary condition reads as
\[
\pd_\nu u(1+\ep B(\om),\om)=-\bc \widetilde f_1(1+\ep B(\om),\om)\,,
\]
where $\nu$ is the outwards normal unit vector on $\pd\Ome$.

We denote by $C^s\Dir(\BB)$ the space of $C^s$~functions on the unit $n$-dimensional
ball $\BB:=\{|z|<1\}$ with zero trace to the boundary. Also,
$\cK\subset C^\infty(\SS )$ denotes the restriction to the unit sphere
of the space of linear functions on~$\RR^n$,
\[
\cK:= \{ V\cdot z: |z|=1,\; V\in\RR^n\}\,.
\]
Equivalently, $\cK$ is the eigenspace of the Laplacian
$\De_{\SS }$ of the unit sphere corresponding to the second
eigenvalue, $n-1$. Also, in what follows we will denote the partial
derivatives of~$F$ (or $\tF$) as
\[
F'(x,u):=\pd_u F(x,u)\,,\quad \nabla F(x,u):= \nabla_x F(x,u)\,,\quad \pd_jF(x,u):= \pd_{x_j} F(x,u)\,.
\]
The following lemma is a reformulation of~\cite[Theorem 2.3 and
Proposition 2.4]{overdet}.

\begin{lemma}\label{L.local}
  For each $p\in\RR^n$, there is some $\bla_p>0$ such that the
  following statements hold for all $\bla\in (0,\bla_p)$:
  \begin{enumerate}
    \item There is a unique function $\phip(r)$ of class $C^{s+2}$
satisfying the ODE
\[
\phip''(r)+\frac{n-1}r\phip'(r)+\bla F(p,f_0(p)+\phip(r))=0
\]
and the boundary condition $\phip(1)=0$ which is regular at
$r=0$. The function $\phip$ is well defined for $r\in[0,1+\de_p]$,
with $\de_p>0$. Furthermore, $\phip(r)>0$ for $r<1$ and $\phip'(1)<0$.

\item The operator
\[
\Tp v:= \De v +\bla \,F'(p, f_0(p)+\phip(|z|)) v
\]
defines an invertible map $\Tp: C^{s+1}\Dir(\BB)\to C^{s-1}(\BB)$.

\item Consider the map $\Hp$  defined for each
  function $\psi$ on the boundary of the ball as
  \[
    \Hp  \psi:= -\phip'(1)\, \pd_\nu v_\psi + \phip''(1) v_\psi
  \]
  where $v_\psi$ is the only solution to the problem $\Tp v_\psi=0$
  on~$\BB$, $v_\psi|_{\pd\BB}= \psi$. Then $\Hp$ maps
  $C^{s+1}(\SS ) \to C^s(\SS )$, its kernel is~$\cK$, and its range is
  the set~$C^s(\BB)\cap\cK^\perp$ of $C^s$~functions orthogonal
  to~$\cK$. Furthermore,
  \begin{equation}\label{estHp}
    \|\psi\|_{C^{s+1}}\leq C_{p,\bla }\|\Hp
    \psi\|_{C^{s}}
  \end{equation}
  for all~$\psi\in C^{s+1}\cap\cK^\perp$.

\item The function~$\phip$ satisfies $\|\phip'\|_{C^{s}((0,1+\de_p))}\leq C\bla$ and is of class $C^s$ in $p$ and $\bla$.
\end{enumerate}
\end{lemma}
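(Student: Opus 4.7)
\medskip
\noindent\textbf{Proof plan.} The four parts are tightly coupled and I would address them in order, working in the small-$\bla$ regime throughout. For (i), I would apply the implicit function theorem to the radial map $(\phi,\bla)\mapsto \phi''+\tfrac{n-1}{r}\phi'+\bla F(p,f_0(p)+\phi)$ on the space of $C^{s+2}$ radial functions on $[0,1]$ that vanish at $r=1$ and are regular at $r=0$. At $\bla=0$ the trivial solution $\phi\equiv0$ is available and the linearization is the radial Dirichlet Laplacian, which is an isomorphism onto $C^s([0,1])$; the IFT then produces $\phip$ depending smoothly on $(p,\bla)$. Standard ODE continuation extends it past $r=1$ to an interval $[0,1+\de_p]$, and the leading expansion
\[
\phip(r)=\frac{\bla\, F(p,f_0(p))}{2n}(1-r^2)+O(\bla^2),
\]
together with the hypothesis $F(p,f_0(p))>0$, yields $\phip>0$ for $r<1$ and $\phip'(1)<0$ for small $\bla>0$. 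Statement (iv) is inherited from this same IFT construction and the explicit asymptotic formula, which also gives $\|\phip'\|_{C^s}\lesssim\bla$. Statement (ii) is equally direct: at $\bla=0$, $\Tp$ reduces to $\De$, which is an isomorphism $C^{s+1}\Dir(\BB)\to C^{s-1}(\BB)$ by classical Schauder theory, and adding the uniformly bounded zero-order term $\bla F'(p,f_0(p)+\phip)$ is a compact perturbation whose norm stays below the first Dirichlet eigenvalue gap for $\bla<\bla_p$, which defines the constant $\bla_p>0$.

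The heart of the lemma is (iii). The starting observation is translation invariance: differentiating the radial equation for $\phip$ along any $V\in\RR^n$ shows that $w(z):=V\cdot\nabla\phip(|z|)$ solves $\Tp w=0$. A short computation of $\pd_i\pd_j\phip(|z|)$ at $|z|=1$ yields $w|_{\pd\BB}=\phip'(1)\,V\cdot\om$ and $\pd_\nu w|_{\pd\BB}=\phip''(1)\,V\cdot\om$, so for $\psi=V\cdot\om\in\cK$ one has $v_\psi=w/\phip'(1)$, and hence $\Hp\psi=0$ by definition. Thus $\cK\subset\ker\Hp$. Next, Green's identity applied to two solutions of $\Tp v=0$ shows the Dirichlet-to-Neumann map $\psi\mapsto\pd_\nu v_\psi$ is symmetric in $L^2(\SS)$, so $\Hp$ is a self-adjoint first-order elliptic pseudodifferential operator on $\SS$; it is therefore Fredholm of index zero and its range is the $L^2$-orthogonal complement of its kernel.

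It remains to show that the kernel does not exceed $\cK$, and this is the main obstacle of the proof because $\Hp$ itself degenerates as $\bla\to 0$: both $\phip'(1)$ and $\phip''(1)$ are $O(\bla)$. I would rescale by $\bla^{-1}$. Using the expansions $\phip'(1)=\phip''(1)=-\bla F(p,f_0(p))/n+O(\bla^2)$ together with $v_\psi=v^{(0)}_\psi+O(\bla)$, where $v^{(0)}_\psi$ denotes the harmonic extension of $\psi$, one finds that $\bla^{-1}\Hp$ converges in operator norm as $\bla\to 0$ to $\tfrac{F(p,f_0(p))}{n}(\La-\mathrm{id})$, where $\La$ is the Dirichlet-to-Neumann map of the Laplacian on $\BB$. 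Decomposing in spherical harmonics of degree $k$ and using $\La\psi_k=k\psi_k$, the limit acts as multiplication by $(k-1)F(p,f_0(p))/n$ on the $k$-th sector; its kernel is precisely the $k=1$ subspace $\cK$, with a uniform spectral gap on $\cK^\perp$. Upper semicontinuity of the kernel dimension along norm-continuous Fredholm families then forces $\ker\Hp=\cK$ for small $\bla$, and the estimate~\eqref{estHp} follows from the uniform invertibility of $\bla^{-1}\Hp$ on $\cK^\perp$.
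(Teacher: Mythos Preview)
Your argument is sound, but note that the paper does not actually prove this lemma: it is stated as a reformulation of Theorem~2.3 and Proposition~2.4 of the authors' earlier work~\cite{overdet}, so there is no in-paper proof to compare against. What you have written is a correct reconstruction of the expected argument, in particular the identification $\cK\subset\ker\Hp$ via translation invariance of the radial equation and the rescaled limit $\bla^{-1}\Hp\to \tfrac{F(p,f_0(p))}{n}(\Laz-\mathrm{id})$, whose spectrum on degree-$k$ spherical harmonics is $(k-1)F(p,f_0(p))/n$, pinning the kernel to~$\cK$ and giving the estimate~\eqref{estHp} by perturbation. Two minor cleanups: in part~(ii) your phrase ``compact perturbation whose norm stays below the first Dirichlet eigenvalue gap'' is muddled---the clean statement is simply that the multiplication operator by $\bla\,F'(p,f_0(p)+\phip)$ has $C^{s+1}\to C^{s-1}$ norm of order~$\bla$, so $\De^{-1}\Tp=I+O(\bla)$ is invertible by Neumann series; and in part~(iii) the passage from $L^2$ self-adjointness of~$\Hp$ to the range characterization $C^s(\SS)\cap\cK^\perp$ tacitly uses elliptic regularity for the first-order pseudodifferential operator~$\Hp$, which you should make explicit.
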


\begin{remark}\label{R.local}
  When the equation is linear (that is, $F(x,u)=f(x)$), one can take
  $\bla_p$ arbitrarily large and
\[
\phip(r) = -\frac\bla{2n} f(p) \, (r^2-1)\,.
\]
The operator $\Hp$ is then
\[
\Hp \psi= \frac\bla{n} f(p)\, (\Laz\psi-\psi)\,,
\]
where $\Laz:=[(\frac n2 -1)^2-\De_{\SS }]^{1/2}-\frac n2+1$ is
the Dirichlet--Neumann map of the ball.
\end{remark}

If what follows we shall always assume that $\bla<\bla_p$.

\begin{proposition}\label{P.existence}
For any small enough~$\ep $ and any function~$B\in C^{s+1}(\SS) $ with
$\|B\|_{C^{s+1}}<1$, there is a unique function~$u=\ueB$ in a
small neighborhood of~$f_0(p)+\phip$ in $C^{s+1}(\Ome)$ that satisfies
Equation~\eqref{eqz} and the Dirichlet boundary
condition~\eqref{DBC}.
\end{proposition}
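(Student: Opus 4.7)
The plan is to apply the implicit function theorem with the unknown being the perturbation of $f_0(p)+\phip(|z|)$, using Lemma~\ref{L.local}(ii) to invert the linearization. Because the domain~$\Ome$ varies with~$\ep$ and~$B$, the first step is to pull everything back to the fixed unit ball~$\BB$. A convenient diffeomorphism $\Phi_{\ep,B}:\BB\to\Ome$, expressed in spherical coordinates, is
\[
\Phi_{\ep,B}(r,\om):=\bigl((1+\ep B(\om))\,r,\,\om\bigr),
\]
which is $C^{s+1}$ and depends smoothly on $(\ep,B)$ in operator norm from $C^{s+1}(\BB)$ to $C^{s+1}(\Ome)$ as long as $\|B\|_{C^{s+1}}<1$ and $\ep$ is small. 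Setting $\hu:=u\circ\Phi_{\ep,B}$, the differential operator $\tL$ becomes a second-order operator $\tL_{\ep,B}$ on~$\BB$ whose coefficients are smooth in $(\ep,B)$ and reduce to $\De$ at $(\ep,B)=(0,0)$, while the Dirichlet boundary condition becomes $\hu|_{\pd\BB}=\hf_0(\ep,\om)$.

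Next I would absorb the nonzero boundary data into an explicit extension. Fix a bounded linear extension operator $E: C^{s+1}(\SS)\to C^{s+1}(\BB)$ (e.g.\ harmonic extension), and define
\[
g_\ep(\om):=\hf_0(\ep,\om)-f_0(p),
\]
which is $O(\ep)$ in $C^{s+1}(\SS)$ since $\hf_0(0,\om)=f_0(p)$. Writing the sought solution as
\[
\hu = f_0(p)+\phip(r) + E g_\ep + w
\]
with $w\in C^{s+1}\Dir(\BB)$ reduces the problem to finding a zero of the nonlinear map
\[
\Psi:\RR\times\{B\in C^{s+1}(\SS):\|B\|_{C^{s+1}}<1\}\times C^{s+1}\Dir(\BB)\longrightarrow C^{s-1}(\BB)
\]
defined by
\[
\Psi(\ep,B,w):=\tL_{\ep,B}\bigl(f_0(p)+\phip+E g_\ep+w\bigr) + \bla\,\tF\!\left(z,\,f_0(p)+\phip+E g_\ep+w\right).
\]
Standard composition estimates for functions of class $C^s$, together with the smooth dependence of $\tL_{\ep,B}$ and $\Phi_{\ep,B}$ on $(\ep,B)$, show that $\Psi$ is $C^1$ (in fact $C^s$) on its domain.

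At the base point $(\ep,B,w)=(0,0,0)$, the diffeomorphism is the identity, $g_0=0$, $\tL_{0,0}=\De$, and $\tF(z,\cdot)=F(p,\cdot)$, so $\Psi(0,0,0)=\De\phip+\bla F(p,f_0(p)+\phip)=0$ by the ODE defining $\phip$ in Lemma~\ref{L.local}(i). The partial derivative with respect to~$w$ at this point is
\[
D_w\Psi(0,0,0)\,v = \De v+\bla F'(p,f_0(p)+\phip(|z|))\,v = \Tp v,
\]
which is a bijection $C^{s+1}\Dir(\BB)\to C^{s-1}(\BB)$ by Lemma~\ref{L.local}(ii). The implicit function theorem then yields a unique $w=w(\ep,B)\in C^{s+1}\Dir(\BB)$, depending continuously on $(\ep,B)$ in a neighborhood of~$(0,0)$, such that $\Psi(\ep,B,w(\ep,B))=0$; unwinding the change of variables gives the announced $\ueB$, unique in a small $C^{s+1}(\Ome)$-neighborhood of $f_0(p)+\phip$.

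The main technical point requiring care is the verification that the pulled-back operator $\tL_{\ep,B}$ and the composition $\tF\circ(\text{solution})$ depend smoothly on the parameters as maps into $C^{s-1}(\BB)$, given only $C^s$ regularity of the data; this is routine but must respect the loss-of-two-derivatives bookkeeping (the equation has both sides in $C^{s-1}$ when $u\in C^{s+1}$). Everything else is a straightforward application of the implicit function theorem once the invertibility of~$\Tp$ provided by Lemma~\ref{L.local} is in hand.
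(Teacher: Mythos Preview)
Your overall strategy---pull the problem back to the fixed unit ball and apply the implicit function theorem, using the invertibility of $\Tp$ from Lemma~\ref{L.local}(ii)---is exactly the paper's approach, and the decomposition $\hu=f_0(p)+\phip+E g_\ep+w$ is a harmless variant of the paper's choice $u=\widetilde f_0+\phi\circ\chip^{-1}$. There is, however, a genuine technical gap in your choice of diffeomorphism.

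The map $\Phi_{\ep,B}(r,\om)=\bigl((1+\ep B(\om))\,r,\,\om\bigr)$ is in general \emph{not} $C^{s+1}$, nor even $C^1$, at the origin. In Cartesian coordinates it reads $z\mapsto z+\ep\,B(z/|z|)\,z$, and the perturbation $B(z/|z|)\,z$ is merely homogeneous of degree one; for nonconstant~$B$ its first partial derivatives are discontinuous at $z=0$ (take for instance $n=2$ and $B(\om)=\om_1$, so that the perturbation is $z_1 z/|z|$). As a consequence the coefficients of the pulled-back operator $\tL_{\ep,B}$, which involve second derivatives of $\Phi_{\ep,B}$, fail to lie in $C^{s-1}(\BB)$, your map $\Psi$ does not land in $C^{s-1}(\BB)$, and the implicit function theorem cannot be invoked as written.

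The paper handles this by inserting a radial cutoff: its diffeomorphism is
\[
(\rho,\om)\longmapsto\bigl([1+\ep\,\chi(\rho)\,B(\om)]\,\rho,\ \om\bigr),
\]
with $\chi$ smooth, $\chi\equiv 0$ for $\rho<\tfrac14$ and $\chi\equiv 1$ for $\rho>\tfrac12$. This makes the map equal to the identity near the origin, hence genuinely $C^{s+1}$ on all of~$\BB$, while still sending $\pd\BB$ onto $\pd\Ome$. With that single modification (and writing the pulled-back nonlinearity as $\tF(\Phi_{\ep,B}(z),\hu)$ rather than $\tF(z,\hu)$), your argument goes through and coincides with the paper's.
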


\begin{proof}
Let
$\chip:\BB\to \Ome$ be the diffeomorphism defined in spherical coordinates as
\[
(\rho,\om)\mapsto \big([1+\ep \chi(\rho)\, B(\om)]\rho,\om\big)\,,
\]
where $\chi(\rho)$ is a smooth cutoff function that is zero for $\rho<1/4$ and $1$
for $\rho>1/2$ . Then one can define a map
\[
\cHp: (-\ep_p,\ep_p)\times C^{s+1}\Dir(\BB)\to C^{s-1}(\BB)
\]
as
\begin{equation*}
\cHp(\ep,\phi):= \big[ \tL(\phi\circ \chip^{-1})\big]\circ\chip+E\circ\chip+ \bla
\big[ \tF(\cdot,\widetilde f_0+\phi\circ\chip^{-1})\big]\circ \chip\,,
\end{equation*}
with the function $E$ defined as
\begin{equation}\label{eq_rho}
E:=\tL \widetilde f_0\,.
\end{equation}
Note that $\|E\|_{C^{s-1}(\Omega_{p,\epsilon B})}\leq C\ep^2$ because $\widetilde
f_0(z):=f_0(p+\ep z)$. Clearly, $\cHp(\ep,\phi)=0$ if and only if~$u:=\widetilde f_0+\phi\circ\chip^{-1}$ solves the
Dirichlet problem~\eqref{eqz}-\eqref{DBC} in~$\Ome$.

Note that, by definition and using~\eqref{eq_rho}, $\cHp(0,\phip)=0$. Also, a short computation
shows that the derivative of~$\cHp(\ep,\phi)$ with respect to~$\phi$ satisfies
\[
D_\phi\cHp(0,\phip)= \Tp\,,
\]
so it is an invertible map $C^{s+1}\Dir(\BB)\to C^{s-1}(\BB)$, cf.~Lemma~\ref{L.local}. The
implicit function theorem in Banach spaces then ensures that, for any $\ep$ close
enough to~0, there is a unique function $\phi^\ep$ in a small
neighborhood of~$\phip$ in $C^{s+1}_\mathrm{Dir}(\BB)$ satisfying
\[
\cHp(\ep,\phi^\ep)=0\,.
\]
Then $\ueB:= \widetilde f_0+\phi^\ep\circ\chip^{-1}$ is the desired solution to the
Dirichlet problem in~$\Ome$.
\end{proof}

We will henceforth denote by
\[
  \PP: C^{s+1}(\SS)\to C^{s+1}(\Ome)\]
the map $\psi\mapsto v_\psi$, where $v_\psi$ is the only solution to
the problem
\[
\Tp v_\psi=0\qquad \text{in }\Ome
\]
with the boundary condition
\[
v_\psi(1+\ep B(\om),\om) = \psi(\om)\,.
\]
Note that the existence and uniqueness of~$v_\psi$ is an easy
consequence of Lemma~\ref{L.local}.

For future
reference, let us record here the definition of the associated
Dirichlet--Neumann operator $\Lae: C^{s+1}(\SS)\to C^{s}(\SS)$,
\[
\Lae\psi(\om):= \nu\cdot A\nabla\PP\psi(1+\ep B(\om),\om)\,.
\]
As $\Lae$ reduces to the standard Dirichlet--Neumann map $\Laz$ when
$\ep=\bla=0$, it is standard that
\begin{align}
  \|\Lae-\Laez\|_{C^{s+1}(\SS)\to C^s(\SS)}&\leq C|\ep|\,,\label{laez}\\
  \|\Lae-\Laz\|_{C^{s+1}(\SS)\to C^s(\SS)}&\leq C(|\ep|+\bla)\label{la0}\,.
\end{align}

\section{Asymptotic expansions}
\label{S.Asympt}

In this section we compute asymptotic formulas for the solution to
the Dirichlet problem in the domain~\eqref{Ome} obtained in Proposition~\ref{P.existence}, valid for
$|\ep|\ll1$. Let us begin with the estimates for the solutions to the Dirichlet problem:

\begin{proposition}\label{P.asympt}
  The function~$\ueB$ is of the form
  \[
\ueB= f_0(p)+\phip(r) + \ep\, \Big\{ \Wp(r)\cdot z +\PP\big[\nabla
f_0(p)\cdot\om -\phip'(1)\, B\big]\Big\} + O(\ep^2)\,,
  \]
  where $\Wp:[0,1+\de_p]\to\RR^n$ is a function with
  $\|\Wp\|_{C^{s+1}}\leq C\bla$.
\end{proposition}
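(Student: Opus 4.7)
My plan is to substitute the ansatz $\ueB = f_0(p) + \phip(r) + \ep u_1 + O(\ep^2)$ into the semilinear equation $\tL\ueB+\bla\tF(z,\ueB)=0$ and the Dirichlet condition from Proposition~\ref{P.existence}, match powers of $\ep$ to extract a linear problem for the first-order correction $u_1$, and then produce an explicit solution of the form $u_1 = \Wp(r)\cdot z + \PP[\nabla f_0(p)\cdot\om-\phip'(1)B]$.

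Expanding in $\ep$, the $\ep^0$-balance reproduces the ODE defining $\phip$, while the $\ep^1$-coefficient yields
\[
\Tp u_1 = -b(p)\cdot\om\,\phip'(r) - \bla\,\nabla F\bigl(p,f_0(p)+\phip(r)\bigr)\cdot z\,,
\]
and matching $\ueB(1+\ep B,\om) = \widetilde f_0(1+\ep B,\om)$ to first order (using $\phip(1)=0$ and $\widetilde f_0(z)=f_0(p)+\ep\nabla f_0(p)\cdot z+O(\ep^2)$) forces the trace $u_1|_{r=1} = \nabla f_0(p)\cdot\om - \phip'(1)B(\om)$.

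The key observation is that the right-hand side of the PDE is a radial function times a linear function of $\om$, so I would look for a particular solution of the form $u_1^{\mathrm{part}}=\Wp(r)\cdot z$ with $\Wp$ vector-valued. Since $z_i=r\om_i$ lies in $\cK$ (with $-\De_{\SS}$-eigenvalue $n-1$), a direct computation using $\De[\Wp_i(r)z_i] = \om_i[r\Wp_i''(r)+(n+1)\Wp_i'(r)]$ gives
\[
\Tp(\Wp(r)\cdot z) = \om\cdot\bigl[r\Wp''(r)+(n+1)\Wp'(r)+\bla r\,F'\bigl(p,f_0(p)+\phip(r)\bigr)\Wp(r)\bigr]\,,
\]
so matching the $\om$-coefficients reduces the PDE to an $\RR^n$-valued second-order ODE for $\Wp$, which is regular at $r=0$ since $\phip'(0)=0$ forces $\phip'(r)/r$ to be smooth. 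I would then impose regularity at $r=0$ together with $\Wp(1)=0$; componentwise this is equivalent to a $\Tp$-Dirichlet problem on $\BB$ with zero boundary data and forcing of the form $(\text{radial})\cdot\om_i$, uniquely solvable by Lemma~\ref{L.local}(ii). Adding the homogeneous piece $\PP[\nabla f_0(p)\cdot\om - \phip'(1)B]$ then corrects the boundary trace to the required value while preserving the PDE.

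To close the argument I would verify two estimates. First, $\|\Wp\|_{C^{s+1}}\leq C\bla$: since $\|\phip'\|_{C^s}\leq C\bla$ by Lemma~\ref{L.local}(iv) and $\phip'(r)/r$ is smooth and $O(\bla)$ (by the radial symmetry of $\phip$ and the ODE defining it), the right-hand side of the equation for $\Wp$ is $O(\bla)$ in $C^{s-1}$, and the standard linear ODE estimate yields the bound. Second, the $O(\ep^2)$ remainder: setting $U$ equal to the proposed expansion and $\delta := \ueB-U$, the construction gives $\tL U+\bla\tF(z,U)=O(\ep^2)$ in $C^{s-1}(\Ome)$ and $\delta|_{\pd\Ome}=O(\ep^2)$, so solving the linearized Dirichlet problem in $\Ome$ (an $O(\ep)$-perturbation of $\BB$ on which the linearization remains invertible) gives $\|\delta\|_{C^{s+1}(\Ome)}=O(\ep^2)$. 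The only delicate point is the simultaneous imposition of regularity at the origin and vanishing at $r=1$ for the radial ODE defining $\Wp$, but this reduces directly to the invertibility of $\Tp$ from Lemma~\ref{L.local}(ii).
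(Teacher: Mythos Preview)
Your proposal is correct and follows essentially the same approach as the paper: both extract the first-order correction $u_1$, derive the linear problem $\Tp u_1 = -\bla\,\nabla F(p,u_0)\cdot z - b(p)\cdot\frac{z}{r}\phip'(r)$ with boundary datum $\nabla f_0(p)\cdot\om-\phip'(1)B$, split $u_1$ into a particular solution $\Wp(r)\cdot z$ (solving the same radial ODE with $\Wp(1)=0$ and regularity at the origin, via Lemma~\ref{L.local}(ii)) plus the homogeneous piece $\PP[\nabla f_0(p)\cdot\om-\phip'(1)B]$, and bound $\|\Wp\|_{C^{s+1}}\leq C\bla$ using $\|\phip'\|_{C^s}\leq C\bla$. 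Your treatment of the $O(\ep^2)$ remainder is slightly more explicit than the paper's one-line conclusion, and note that the boundary condition for $u_1$ is imposed on $\pd\Ome$ (i.e.\ at $r=1+\ep B$) rather than at $r=1$, consistent with the definition of $\PP$; to first order these agree, and your remainder argument on $\pd\Ome$ handles this correctly.
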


 \begin{remark}\label{R.asympt}
 In the case when $F(x,u)=f(x)$, the formula is slightly more
 explicit:
 \begin{multline*}
   \ueB= f_0(p) -\frac{\bla}{2n}f(p)\, (r^2-1) \\
   +\ep\bigg\{ \bigg[\nabla f_0(p) -\frac{\bla (r^2-1)}{2n+4}\Big(\nabla
   f(p)-\frac{f(p)b(p)}{n}\Big)\bigg] \cdot z +\frac{\bla\, f(p)}{n}\mathbb P_{\ep B} B\bigg\} + O(\ep^2)\,.
 \end{multline*}
 Here we are using the notation $\mathbb P_{\ep B}\equiv \mathbb P_{p,0,\ep B}$, which does not depend on $p$ because $F'=0$.
 \end{remark}

\begin{proof}
  Note that $u_0:= f_0(p)+\phip(r)$ satisfies the equation
  \[
\De u_0 + \bla F(p,u_0)=0\,,\qquad u_0|_{r=1}=f_0(p)\,.
\]
Let us write $u_1:= [\ueB - u_0]/\ep$ and observe that
\[
\tF(z,\ueB)= F(p+\ep z,u_0+\ep u_1)= F(p,u_0)+ \ep \big[ \nabla
F(p,u_0)\cdot z + F'(p,u_0) u_1\big] + O(\ep^2)\,.
\]
As $\tL \ueB+ \bla \tF(z,\ueB)=0$ with the boundary condition
\[
\ueB(1+\ep B(\om),\om)= \tf_0(1+\ep B(\om),\om)= f_0(p)+\ep \nabla
f_0(p)\cdot \om + O(\ep^2)\,,
\]
this ensures that $u_1$ satisfies an equation of the form
\begin{align*}
\Tp u_1+ \bla \nabla
F(p,u_0)\cdot z + b(p)\cdot \frac z r\,
  \phip'(r) + O(\ep)=0
\end{align*}
in $\Ome$ and the boundary condition
\[
u_1(1+\ep B(\om),\om)=  \nabla f_0(p)\cdot\om -\phip'(1)\, B(\om)+ O(\ep)\,.
\]

To analyze~$u_1$, we start by noting that
\[
  u_1^*:= \PP[\nabla
  f_0(p)\cdot\om -\phip'(1)\, B(\om)]
\]
satisfies the equation $\Tp u_1^*=0$ in~$\Ome$ and the
boundary condition
\[
u_1^*(1+\ep B(\om),\om)=  \nabla f_0(p)\cdot\om -\phip'(1)\, B(\om)\,.
\]

It is an easy consequence of Lemma~\ref{L.local} that the
equation
\[
\Tp w+\bla \nabla F(p,u_0(|z|))\cdot z +b(p)\cdot \frac{z}{r}\, u_0'(|z|)=0\quad \text{in } \BB\,,\qquad w|_{\pd\BB}=0
\]
has a unique solution~$w$, which is then of the form $w=  \Wp(|z|)\cdot z$ for some $\RR^n$-valued
function $\Wp$. Specifically, its $j$-th component $W_j(r):=
\Wp(r)\cdot e_j$ satisfies the ODE
\[
W_j''(r)+ \frac{n+1}r W_j'(r) +\bla F'(p,u_0(r)) W_j(r)+ \bla \,
\pd_jF(p,u_0(r)) + b_j(p)\, \frac{u_0'(r)}r=0
\]
with the boundary condition $W_j(1)=0$ and the requirement that $W_j$
must be regular at~0. As $u_0(r)$ is well defined up to $r=1+\de_p$,
so is $W_j(r)$. The function~$\Wp$ is obviously bounded as
\[
\|\Wp\|_{C^{s+1}((0,1+\de_p))}\leq C \bla
\|\pd_jF(p,u_0)\|_{C^{s-1}((0,1+\de_p))} + C\bigg\| \frac{u_0'}r\bigg\|_{C^{s-1}((0,1+\de_p))}\,.
\]
Since $\|u_0'\|_{C^{s}((0,1+\de_p))}\leq C\bla$ by Lemma~\ref{L.local}, we infer that
$\|\Wp\|_{C^{s+1}}=O(\bla)$ as well.

By construction, we immediately obtain that $u_1=u_1^*+w+O(\ep)$, so
the proposition follows. The expression of Remark~\ref{R.asympt}
follows from the same argument taking into the account the formula
for~$\phip$ provided in Remark~\ref{R.local}.
\end{proof}

Next we obtain asymptotic formulas for the normal derivative of~$u$:

\begin{proposition}\label{P.Neumann}
  The normal derivative of the function $\ueB$ satisfies
  \[
\pd_\nu \ueB = \phip'(1) + \ep\Big\{ \Hp B +[ \nabla
f_0(p) + \Vp]\cdot \om \Big\} +O(\ep^2)\,,
\]
where the constant vector $\Vp\in\RR^n$ satisfies $|\Vp|\leq C\bla$.
\end{proposition}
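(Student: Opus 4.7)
The plan is to differentiate the expansion for $\ueB$ provided by Proposition~\ref{P.asympt}, reduce the normal derivative on $\pd\Ome$ to a radial derivative evaluated at $r=1+\ep B(\om)$, and then identify the $\Hp B$ term by invoking the defining identity of $\Hp$ from Lemma~\ref{L.local}. First, one writes the boundary as the level set $\{\Phi=0\}$ of $\Phi(z):=r-1-\ep B(z/|z|)$. Since $B$ only depends on $z/|z|$, its Euclidean gradient is purely tangential to $\SS$, and a short computation gives $\nu = \om - \ep\,\nabla_\SS B + O(\ep^2)$. The leading-order gradient of $\ueB$ is radial (its $O(1)$ part being $\phip'(r)\om$), so the tangential correction to $\nu$ contributes only at order $\ep^2$ after being contracted with $\nabla\ueB$. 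Therefore
\[
\pd_\nu \ueB = \pd_r \ueB\big|_{r=1+\ep B(\om)} + O(\ep^2)\,.
\]

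Next, I would expand $\pd_r\ueB$ term by term using Proposition~\ref{P.asympt}. Taylor-expanding $\phip'(r)$ at $r=1$ gives $\phip'(1)+\ep B(\om)\phip''(1)+O(\ep^2)$. Writing $\Wp(r)\cdot z = r\,\Wp(r)\cdot\om$, one finds $\pd_r[\Wp(r)\cdot z]|_{r=1}=[\Wp(1)+\Wp'(1)]\cdot\om$. For the $\PP\psi$ piece with $\psi := \nabla f_0(p)\cdot\om - \phip'(1)B$, only the $\ep=0$ analogue of $\PP$ matters at the order of interest (the difference between $\PP$ and its unit-ball counterpart is $O(\ep)$ in $C^{s+1}$, and everything is multiplied by $\ep$), so $\pd_r\PP\psi|_{r=1+\ep B} = \pd_\nu v_\psi|_{\pd\BB} + O(\ep)$, where $v_\psi$ solves $\Tp v_\psi=0$ on $\BB$ with boundary trace $\psi$.

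The decisive step is to invoke Lemma~\ref{L.local}(iii) to rewrite $\pd_\nu v_\psi = [\phip''(1)\psi - \Hp\psi]/\phip'(1)$. Since $\nabla f_0(p)\cdot\om \in \cK = \ker\Hp$, we obtain $\Hp\psi = -\phip'(1)\Hp B$, so the $\ep$-order contribution coming from $\PP\psi$ becomes
\[
\frac{\phip''(1)}{\phip'(1)}\,\nabla f_0(p)\cdot\om - \phip''(1)B + \Hp B\,.
\]
The crucial simplification is that the $-\ep\,\phip''(1)B$ here cancels exactly the $\ep B(\om)\phip''(1)$ produced by the Taylor expansion of $\phip'$. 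Gathering the surviving pieces proves the stated formula with
\[
\Vp := \Big[\frac{\phip''(1)}{\phip'(1)}-1\Big]\nabla f_0(p) + \Wp(1) + \Wp'(1)\,.
\]
The bound $|\Vp|\leq C\bla$ follows from $\|\Wp\|_{C^{s+1}}\leq C\bla$ (Proposition~\ref{P.asympt}) together with $\phip''(1)-\phip'(1)=O(\bla^2)$, which one reads off by evaluating the ODE for $\phip$ at $r=1$: the relation $\phip''(1)=-(n-1)\phip'(1)-\bla F(p,f_0(p))$ combined with $\phip'(1)=-\bla F(p,f_0(p))/n+O(\bla^2)$ (from Lemma~\ref{L.local}(iv) and integrating the linearization in $\bla$) gives the claim; dividing by $\phip'(1)=O(\bla)$ yields $|\phip''(1)/\phip'(1)-1|\leq C\bla$. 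The main obstacle is spotting the cancellation that removes the $\phip''(1)B$ contribution; once it is in place, everything else is routine bookkeeping.
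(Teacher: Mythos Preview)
Your proof is correct and follows essentially the same route as the paper's: compute $\nu=\om-\ep\nabla_\SS B+O(\ep^2)$, differentiate the expansion from Proposition~\ref{P.asympt}, and identify the $\Hp B$ piece. The only cosmetic difference is that the paper packages the last step through the Dirichlet--Neumann operators $\Lae,\Laez,\Laz$ and the bounds~\eqref{laez}--\eqref{la0}, whereas you invert the defining relation $\Hp\psi=-\phip'(1)\pd_\nu v_\psi+\phip''(1)\psi$ directly; these are equivalent, and in fact your bookkeeping makes the contribution $\big[\tfrac{\phip''(1)}{\phip'(1)}-1\big]\nabla f_0(p)$ to $\Vp$ explicit (the paper absorbs it via~\eqref{la0}). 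One harmless simplification you missed: $\Wp(1)=0$ by the boundary condition imposed on $W_j$ in the proof of Proposition~\ref{P.asympt}, so that term drops from your formula for~$\Vp$.
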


 \begin{remark}\label{R.Neumann}
   When $F(x,u)=f(x)$, one can obtain a more compact formula:
   \begin{align}
\nonumber \pd_\nu \ueB = -\frac\bla n f(p) + &\ep\bigg\{ -\frac\bla n f(p)\,
 \big(B-\Laz B\big) \\
 &+ \nabla f_0(p)\cdot \om -\frac{\bla}{n+2}\bigg(\nabla f(p)-\frac{f(p)b(p)}{n}\bigg)\cdot\omega\bigg\} + O(\ep^2)\,.
   \end{align}
   \end{remark}

\begin{proof}
Since the boundary of~$\Ome$ is the zero set of the function
$r-\ep B(\om)-1$, it is clear that its unit normal vector at the point
$(1+\ep B(\om),\om)$ is
\[
\nu= \frac{\om - \frac\ep{1+\ep B(\om)}\nabla_{\SS} B(\om)}{[1+\frac{\ep^2}{(1+\ep B(\om))^2} |\nabla_\SS B(\om)|^2]^{1/2}}
= \om-\ep \nabla_\SS B(\om)+ O(\ep^2)\,,
\]
where $\nabla_\SS$ denotes covariant differentiation on the unit
sphere.

  Using this formula, it follows from Proposition~\ref{P.asympt} that
  \begin{multline*}
    \pd_\nu \ueB= \nu\cdot\nabla \ueB (1+\ep B(\om),\om)= \phip'(1+\ep B(\om)) \\
     + \ep\Big\{  (r\Wp)'(1)\cdot \om +\nu\cdot\nabla \PP\big[\nabla
      f_0(p)\cdot\om -\phip'(1)\, B\big]\Big\} + O(\ep^2)\,.
  \end{multline*}
Since $\phip(r)$ is $C^{s+1}$-smooth for $r<1+\de_p$, let us now
expand $\phip'$ and use the definition of the operator~$\Lae$ to write
\begin{align*}
  \pd_\nu \ueB& =\phip'(1) + \ep\Big\{ \phip''(1) B-\phip'(1)\, \Lae
                B\\
  &\qquad + \Lae\big(\nabla f_0(p)\cdot \omega\big)+
    \Wp'(1)\cdot\om\Big\} + O(\ep^2)\,.
\end{align*}
Let us now recall that $\Hp  B:= \phip''(1) B-\phip'(1)\, \Laez B$ (cf. Lemma~\ref{L.local}) and
that the usual Dirichlet--Neumann map of the ball satisfies $\Laz(V\cdot
\om)=V\cdot \om$ for all $V\in\RR^n$. Therefore, we can use the
bounds~\eqref{laez}-\eqref{la0} and the estimate $|\Vp|\leq C\bla$ with
\[
\Vp:=\Wp'(1)\,,
\]
proven in Proposition~\ref{P.asympt}, to obtain the formula of the
statement. The expression of Remark~\ref{R.Neumann} follows from the
above argument after taking into account the expression for $\ueB$
given in Remark~\ref{R.asympt}.
\end{proof}

\section{Proof of Theorem~\ref{T.1} when $A(x)=I$}
\label{S.T2}

For any given point $p\in\RR^n$, let us now define a map
\[
  \cFp:(-\ep_p,\ep_p)\times X_{s+1}^1\to C^s(\SS)\,,
\]
with $X_s^1:=\{ b\in C^s(\SS): \|b\|_{C^s}<1\}$, as
\[
\cFp(\ep, B):= \pd_\nu\ueB -\frac{\phip'(1)}{f_1(p)}\widetilde f_1\,.
\]
Roughly speaking, this map measures how far the Dirichlet
solution~$\ueB$ is from satisfying the Neumann condition in the domain~$\Ome$ with a
constant
$$\bc:=-\frac{\phip'(1)}{f_1(p)}>0\,.$$

An immediate consequence of the asymptotic formulas for $\pd_\nu \ueB$
proved in Proposition~\ref{P.Neumann} and the fact that
\[
\widetilde f_1(1+\ep B(\om),\om)= f_1(p)+ \ep \nabla f_1(p)\cdot\om +O(\ep^2)\,,
\]
is the following:

\begin{proposition}\label{P.cFp}
  For any fixed~$p\in\RR^n$, any $B\in X_{s+1}^1(\SS)$ and any $|\ep|<\ep_p$,
  \[
    \cFp(\ep, B) = \ep\Bigg\{ \Hp B+\bigg[ \nabla f_0(p)-
    \frac{\phip'(1)}{f_1(p)} \nabla f_1(p) +\Vp\bigg]\cdot\om\Bigg\} + O(\ep^2)\,.
\]
\end{proposition}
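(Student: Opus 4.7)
The plan is to read off the expansion directly from the two ingredients built in the preceding section, so the whole proof is essentially an algebraic bookkeeping exercise, with no new analytical input required.

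First I would insert the asymptotic expansion of the normal derivative from Proposition~\ref{P.Neumann},
\[
\pd_\nu \ueB = \phip'(1) + \ep\Big\{ \Hp B +[ \nabla f_0(p) + \Vp]\cdot \om \Big\} +O(\ep^2)\,,
\]
into the definition $\cFp(\ep,B):=\pd_\nu \ueB-\frac{\phip'(1)}{f_1(p)}\tf_1$. In parallel, I would Taylor expand the factor $\tf_1(1+\ep B(\om),\om)=f_1(p+\ep(1+\ep B(\om))\om)$ in powers of~$\ep$, using that $f_1\in C^s$ with $s>2$, to obtain
\[
\tf_1(1+\ep B(\om),\om)= f_1(p)+ \ep\, \nabla f_1(p)\cdot\om +O(\ep^2)\,,
\]
where the $O(\ep^2)$ remainder is controlled uniformly in~$\om$ and~$B$ by $C\,\|f_1\|_{C^2}(1+\|B\|_{C^0})^2$.

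Multiplying this second expansion by $-\phip'(1)/f_1(p)$ and adding to the first, the order-zero contributions are exactly $\phip'(1)-\phip'(1)=0$, which cancel, while at order~$\ep$ the terms combine into
\[
\ep\bigg\{ \Hp B+\Big[ \nabla f_0(p) -\frac{\phip'(1)}{f_1(p)}\nabla f_1(p)+\Vp\Big]\cdot\om\bigg\}\,.
\]
Collecting the two $O(\ep^2)$ remainders into a single one yields the claimed identity.

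The only potential subtlety, and the one I would be careful about, is that all remainders genuinely live in the target space $C^s(\SS)$ with bounds uniform for $B$ ranging in the unit ball of~$C^{s+1}(\SS)$ and $|\ep|<\ep_p$; this is inherited from the corresponding uniformity already established in Propositions~\ref{P.asympt} and~\ref{P.Neumann}, together with the standard Taylor expansion of the $C^s$ function~$f_1$. Since no new estimate is needed, there is no real obstacle beyond verifying that the remainder for $\tf_1$ is $C^s$-bounded on the sphere, which is immediate from $f_1\in C^s$ and $s>2$.
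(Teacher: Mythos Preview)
Your proposal is correct and matches the paper's own argument essentially verbatim: the paper states the proposition as ``an immediate consequence of the asymptotic formulas for $\pd_\nu \ueB$ proved in Proposition~\ref{P.Neumann} and the fact that $\widetilde f_1(1+\ep B(\om),\om)= f_1(p)+ \ep \nabla f_1(p)\cdot\om +O(\ep^2)$,'' which is precisely the combination you carry out. Your additional remarks on the uniformity of the $C^s$ remainders are a welcome bit of care that the paper leaves implicit.
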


 \begin{remark}\label{R.cFp}
   When $F(x,u)=f(x)$, one can obtain a slightly more explicit formula:
   \begin{align}
 \nonumber \cFp(\ep, B) =\ep\Bigg\{ -\frac\bla n f(p)\,
 \big(B-\Laz B\big) +& \bigg[\nabla f_0(p)+
     \frac{\bla\, f(p)}{n\,f_1(p)} \nabla f_1(p)\bigg] \cdot \om\\& -\frac{\bla}{n+2}\bigg[\nabla f(p)-\frac{f(p)b(p)}{n}\bigg]\cdot\omega\Bigg\} + O(\ep^2)\,.
   \end{align}
  \end{remark}

It then follows that the function $\cFp(\ep,B)/\ep$
can be defined at $\ep=0$ by continuity. Furthermore, its derivative
with respect to~$B$ involves the operator~$\Hp$, whose kernel was shown to be the space~$\cK$ in
Lemma~\ref{L.local}. Consequently, let us define the spaces
\[
\cX_s:=\{ b\in C^s(\SS): \cP_{\mathcal K} b=0\}\,,\qquad \cX_s^1:=\{ b\in\cX_s: \|b\|_{C^s}<1\}\,,
\]
with $\cP_\cK$ being the orthogonal projector onto the
subspace~$\cK$. We also define the operator
\[
  \cP b:= b-\cP_{\mathcal K} b\,.
\]
It is clear from these expressions that~$\cP$ maps each
space $C^s(\SS)$ into itself and $\cX_s^1\subset X^1_s$.

By Proposition~\ref{P.cFp}, we can now define a map
\[
\cGp: (-\ep_p,\ep_p)\times \cX_{s+1}^1\to\cX_s
\]
as
\[
\cGp(\ep, B):= \frac{\cP\cFp(\ep, B)}\ep\,.
\]

\begin{lemma}\label{L.vf}
  Let~$U\subset\RR^n$ be any bounded domain. For any $\bla\in(0,\bla_U)$, with 
  $$\bla_U:=\inf_{p\in U}\,\bla_p>0\,,$$ 
  there exist some $\ep_{U,\bla}>0$ and a
  $C^s$~function $Y_{\ep,\bla}: U\to \RR^n$ such
  that
  \[
\pd_\nu \ueBe- \frac{\phip'(1)}{f_1(p)}\widetilde f_1 = \Ye(p)\cdot \om
\]
for all~$p\in U$ and all $|\ep|<\ep_{U,\bla}$. Here $\Ye(p):= Y(\ep,p,\bla)$
is of class $C^s$ in all its arguments, and can be interpreted as a family of parametrized vector fields on~$U$, and
$\Be$ is a certain function in $\cX^1_{s+1}$.
\end{lemma}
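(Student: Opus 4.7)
The plan is to apply the implicit function theorem to the map $\cGp$ at the base point $(\ep, B) = (0, 0)$ and then extract the vector field $\Ye$ from the residual.

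First, I would examine the limit and linearization of $\cGp$ at $\ep = 0$. The expansion in Proposition~\ref{P.cFp} factors an $\ep$ out of $\cFp(\ep, B)$ and displays the leading part as $\Hp B$ plus a term of the form $V\cdot\om$ with $V\in\RR^n$. Any such linear function lies in $\cK$, so the projector $\cP$ annihilates it, while $\Hp B\in \cK^\perp$ by Lemma~\ref{L.local}(iii) is preserved by $\cP$. Hence $\cGp(\ep, B) = \Hp B + O(\ep)$ extends continuously to $\ep = 0$ with $\cGp(0, B) = \Hp B$; in particular $\cGp(0, 0) = 0$ and $D_B\cGp(0, 0) = \Hp|_{\cX_{s+1}}$.

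Second, I would verify that this linearization is an isomorphism from $\cX_{s+1}$ onto $\cX_s$. By Lemma~\ref{L.local}(iii) the kernel of $\Hp$ is exactly $\cK$ and its range is $\cK^\perp \cap C^s(\SS) = \cX_s$, with the inverse bounded by~\eqref{estHp}. Thus $\Hp|_{\cX_{s+1}}$ is a Banach space isomorphism, and the standard implicit function theorem produces, for each fixed $p \in U$ and $\bla\in(0,\bla_U)$, a unique function $\Be\in \cX^1_{s+1}$ solving $\cGp(\ep, \Be) = 0$ for all $|\ep| < \ep_{p,\bla}$, smooth in $\ep$ and close to $0$ in $\cX_{s+1}$ (since at $\ep=0$ the solution must lie in $\cK\cap\cX_{s+1}=\{0\}$).

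To make $\ep_{p,\bla}$ uniform in $p\in U$, I would invoke a parametric version of the implicit function theorem, treating $p$ as an additional finite-dimensional parameter. The operator $\Hp$ and its bounded inverse depend continuously on $p$ by the $C^s$-regularity of $\phip$ in $p$ asserted in Lemma~\ref{L.local}(iv), and the hypothesis $\bla_U>0$ together with the boundedness of $U$ ensures that the relevant implicit constants are uniform on compact subsets. This yields the common $\ep_{U,\bla}>0$ and a $C^s$ family $\Be = B(\ep,p,\bla)$.

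Finally, the equation $\cGp(\ep, \Be) = 0$ says precisely that $\cFp(\ep, \Be) \in \cK$, so there is a unique vector $Y(\ep, p, \bla) \in \RR^n$ with $\cFp(\ep, \Be) = Y(\ep, p, \bla)\cdot \om$, whose $C^s$-regularity in $(\ep, p, \bla)$ is inherited from that of $\cFp(\ep, \Be)$. Unwinding the definition of $\cFp$ then gives the identity in the statement. The main obstacle is the uniform control of the implicit function theorem across $p \in U$: ensuring that the remainder in Proposition~\ref{P.cFp} is uniformly $O(\ep^2)$ locally in $p$ and that the constants $C_{p,\bla}$ in~\eqref{estHp} stay bounded on compacta requires carefully tracking how $\phip$ and $\Hp$ depend on $p$, information supplied by Lemma~\ref{L.local}(iv).
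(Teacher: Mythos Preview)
Your proposal is correct and follows essentially the same approach as the paper: both apply the implicit function theorem to $\cGp$ at $(\ep,B)=(0,0)$, using that $D_B\cGp(0,0)=\Hp|_{\cX_{s+1}}$ is an isomorphism $\cX_{s+1}\to\cX_s$ by Lemma~\ref{L.local}(iii), and then read off $\Ye(p)$ from the fact that $\cFp(\ep,\Be)\in\cK$. Your discussion of the uniformity in $p\in U$ via a parametric implicit function theorem is a bit more explicit than the paper's, which simply asserts the $C^s$-dependence on all arguments from the regularity of the left-hand side.
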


\begin{proof}
  Let us begin by showing that the Fr\'echet derivative
  $D_B\cGp(0,0): \cX_{s+1}\to \cX_s$ is one-to-one. To see this, note
  that Proposition~\ref{P.cFp} and the fact that $\cP(A\cdot \om)=0$
  for any $A\in\RR^n$ imply that the derivative of~$\cGp$ with respect
  to~$B$ is of the form
\[
D_B\cGp(\ep,0) = \Hp+ \cE
\]
with $\|\cE\|_{\cX_{s+1}\to\cX_s}\leq C|\ep|$. Here we have used that,
by Lemma~\ref{L.local}, $\cP\Hp=\Hp$ because the range of the elliptic
first-order operator~$\Hp$ is
contained in $\cK^\perp$. The estimate~\eqref{estHp} then ensures that
$D_B\cGp(\ep,0)$ is an invertible map $\cX_{s+1}\to \cX_s$ provided that
$\ep$ is small enough.

As $\cGp(0,0)=0$, the invertibility of $D_B\cGp(\ep,0)$ implies, via the
implicit function theorem, that for any small enough~$\ep$, there is a
unique function~$\Be$ in a small neighborhood of~$0$ such that
\[
\cGp(\ep,\Be)=0\,.
\]
By the definition of~$\cFp$ and the fact that $\cK=\{Y\cdot\om:
Y\in\RR^n\}$, this implies that there is some $Y(\ep,p,\bla)\in\RR^n$
such that
\[
\pd_\nu \ueBe - \frac{\phip'(1)}{f_1(p)}\widetilde f_1= Y(\ep,p,\bla)\cdot\om\,.
\]
Furthermore, $Y(\ep,p,\bla)$ is a $C^s$-smooth function of its
arguments because so is the left hand side of this identity.
\end{proof}

Let us now note that the asymptotic expression of the vector
field~$\Ye(p)$ can be read off Proposition~\ref{P.cFp}:

\begin{lemma}\label{L.asymptY}
  The vector field~$\Ye$ is of the form
  \[
\Ye(p)= \ep \bigg[ \nabla f_0(p)-
    \frac{\phip'(1)}{f_1(p)} \nabla f_1(p) +\Vp\bigg]+O(\ep^2)\,.
  \]
When $F(x,u)=f(x)$, one can write down the more precise expression
  \[
 \Ye(p)= \ep \bigg\{ \nabla f_0(p)+
     \frac{\bla\, f(p)}{n\,f_1(p)} \nabla f_1(p) -\frac{\bla}{n+2}\bigg[\nabla f(p)-\frac{f(p)b(p)}{n}\bigg]\bigg\}+O(\ep^2)\,.
     \]
  \end{lemma}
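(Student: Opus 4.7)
The plan is to read off the leading order of $\Ye(p)$ directly from the expansion in Proposition~\ref{P.cFp} by substituting $B=\Be$, once we know that $\Be$ is small of order~$\ep$.

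First I would establish that $\|\Be\|_{C^{s+1}(\SS)}=O(\ep)$. Since $\Be$ is produced by the implicit function theorem applied to $\cGp(\ep,B)=0$ near $(0,0)$ and $\cGp$ is $C^s$-smooth in $\ep$ with $\Be|_{\ep=0}=0$, this is immediate. Alternatively one can see it directly: setting $\cGp(\ep,\Be)=0$ and using Proposition~\ref{P.cFp} gives
\[
\Hp \Be + \cP\Bigl\{\bigl[\nabla f_0(p)-\tfrac{\phip'(1)}{f_1(p)}\nabla f_1(p)+\Vp\bigr]\cdot\om\Bigr\}+O(\ep)=0,
\]
and the middle term vanishes because $V\cdot\om\in\cK$ for any $V\in\RR^n$, so $\cP(V\cdot\om)=0$. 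Hence $\Hp \Be=O(\ep)$, and the estimate \eqref{estHp} together with $\Be\in\cX_{s+1}$ yields $\|\Be\|_{C^{s+1}}=O(\ep)$.

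Next I would substitute $B=\Be$ directly into the expansion of $\cFp$ from Proposition~\ref{P.cFp}. Writing $V_0(p):=\nabla f_0(p)-\tfrac{\phip'(1)}{f_1(p)}\nabla f_1(p)+\Vp$, the formula reads
\[
\cFp(\ep,\Be)=\ep\,\Hp\Be+\ep\,V_0(p)\cdot\om+O(\ep^2).
\]
The first term is $O(\ep^2)$ because $\Hp\Be=O(\ep)$. By Lemma~\ref{L.vf}, the left-hand side equals $\Ye(p)\cdot\om$. Since the correspondence $V\leftrightarrow V\cdot\om$ between $\RR^n$ and $\cK$ is a linear isomorphism (and the $O(\ep^2)$ remainder, measured in $C^s(\SS)$, projects onto a vector of the same size in $\RR^n$), we conclude $\Ye(p)=\ep\,V_0(p)+O(\ep^2)$, which is precisely the stated formula.

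For the linear case $F(x,u)=f(x)$, exactly the same argument applies starting from the more explicit expansion in Remark~\ref{R.cFp}; we also use $\phip'(1)=-\bla f(p)/n$ from Remark~\ref{R.local} to simplify the coefficient of $\nabla f_1(p)$. There is no real obstacle here — the only mild point is to confirm that the $O(\ep^2)$ remainder term in Proposition~\ref{P.cFp} remains $O(\ep^2)$ when evaluated at $B=\Be$ with $\Be=O(\ep)$, which follows because the remainder is controlled uniformly over a $C^{s+1}$-ball of functions $B$ by the proof of Proposition~\ref{P.Neumann}.
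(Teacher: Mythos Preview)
Your proof is correct and follows exactly the approach the paper intends: the paper simply states that the expression for~$\Ye(p)$ ``can be read off Proposition~\ref{P.cFp}'' and gives no further argument, so your write-up supplies the details the paper omits. One minor simplification: since $\Hp\Be\in\cK^\perp$ and $\Ye(p)\cdot\om\in\cK$, projecting the identity $\cFp(\ep,\Be)=\Ye(p)\cdot\om$ onto~$\cK$ directly yields $\Ye(p)=\ep V_0(p)+O(\ep^2)$ without needing the preliminary bound on~$\Be$---but your route through $\Be=O(\ep)$ is of course also valid.
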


  \begin{proof}[Proof of Theorem~\ref{T.1} when $A(x)=I$ and of Remark~\ref{R.Gka}]
Let us suppose that $p^*$ is a nondegenerate critical point of the
function~$f_0$. As $\phip'(1)=O(\bla)$ by Lemma~\ref{L.local},
Lemma~\ref{L.asymptY} implies that
\[
\frac{\Ye(p)}\ep = \nabla f_0(p) + \cE
\]
with an error bounded as $\|\cE\|_{C^1(U)} \leq C_U|\bla|+ C_U|\ep|$ for any bounded domain $U\ni p^*$.
If $|\bla|$ and~$|\ep|$ are small enough, it is then standard that
there is a unique point $\pe$ in a small neighborhood of~$p^*$ such that
\[
\Ye(\pe)=0\,.
\]
By Lemma~\ref{L.vf}, and setting $\bc:=-\phi_{\pe,\bla}'(1)/f_1(\pe)$, this ensures that
\[
\pd_\nu u_{\ep,\pe,\bla,\Be}+ \bc \widetilde f_1\equiv 0\,,
\]
which implies the claim of the theorem with the domain
$\Om_{\pe,\ep \Be}$.

To prove Remark~\ref{R.Gka} on overdetermined solutions for the torsion problem, let us assume that $F(x,u)=f(x)=1$ and that $p^*$ is a nondegenerate
critical point of the function $f_0+\ka \ln f_1$ for some constant
$\ka>0$. In this case, since $f(x)=1$ and $b(x)=0$, Lemma~\ref{L.asymptY} implies that
\[
\frac{\Ye(p)}\ep = \nabla f_0(p) +\frac{\bla}{n}\,{\nabla \ln f_1(p)} + \cE'
\]
with $\|\cE'\|_{C^1(U)} \leq C_U\ep$. As one
can pick any positive value of $\bla$ by Remark~\ref{R.local}, let us
fix $\bla=\bla^*:= n\ka>0$. The previous
argument then allows us to conclude that, for any small enough~$\ep$,
there exists some point $p_\ep$ close to~$p^*$ for which $\Ye(p_\ep)=0$. As above,
this implies the existence of solutions to the overdetermined
torsion problem. The case of $f_0=0$, $f_1=1$ and $F(x,u)=f(x)$ is
handled similarly, so Remark~\ref{R.Gka} then follows.
\end{proof}

\section{Introduction of a nonconstant matrix $A(x)$ and conclusion of
  the proof}
\label{S.A}

In this section we will show why the proof of Theorem~\ref{T.1}
carried out for the case when $A(x)=I$ remains valid, with only minor
variations, in the case of a general matrix~$A(x)$.

The key idea is that we are constructing domains that are small
deformations of the ball of radius~$\ep$, with $\ep\ll1$. Over scales
of order~$\ep$, the function~$A(x)$ is essentially constant, so it
stands to reason that one might be able to compensate the effect of
having a nonconstant matrix~$A(x)$ (at least, to some orders when
considering an asymptotic expansion in~$\ep$) by deforming the balls
accordingly. More visually, this would correspond morally to picking
an ellipsoidal domain at each point~$x$ with axes determined by the
matrix~$A(x)$.

The way to implement this idea is through (a rescaling of) the normal coordinates
associated with the matrix-valued function~$A$, which we now regard as a Riemannian
metric on~$\RR^n$ of class~$C^{s+2}$. These are defined through the
exponential map at a point~$p\in\RR^n$,
\[
  \exp_p^A: U_p\to\RR^n\,,
\]
which
maps a certain domain~$U_p\subset\RR^n$ diffeomorphically onto its
image. It is standard~\cite{DK} that $\exp_p^A(Z)$ is a~$C^{s+1}$ function
of~$Z\in U_p$ and of~$p\in\RR^n$. The normal coordinates at~$p$ are just the Cartesian
coordinates $Z=(Z_1,\dots, Z_n)$ on~$U_p\subset \RR^n$. In these
coordinates, the metric reads as $\widehat A(Z)=I+ O(|Z|^2)$. More precisely, $\widehat A(Z)=(\widehat a_{ij}(Z))$ is given by the pullback by the
exponential map of the metric tensor, which is well-known to be of the form
\[
(\exp_p^A)^* \big[ a_{ij}(x) \, dx_i \, dx_j\big] =: \ha_{ij}(Z)\, dZ_i\, dZ_j
\]
with functions $\ha_{ij}$ of class $C^{s}(U_p)$ such that
\[
\ha_{ij}(0)=\de_{ij}\,,\qquad \pd_{Z_k} \ha_{ij}(0)=0\,.
\]
Therefore, normal coordinates enable us to write the matrix as the
identity plus a $C^{s}$-smooth quadratic error. Incidentally, it is well known
that the leading order
contribution of the error is determined by the curvature of the
metric~$A$ at the point~$p$.

We are now ready to reformulate the overdetermined problem with a
general function~$A$ as a small perturbation of the case $A(x)=I$. For each function~$B\in C^{s+1}(\SS)$ with $\|B\|_{C^{s+1}}<1$
and each small enough~$\ep$, one can then define the domain
$\Ome\subset\RR^n$ (which will play the same role as~\eqref{Ome}) as
\[
\Ome:=\big\{\exp_p^A(\ep z): |z|< 1+\ep B(z/|z|)\big\}\,.
\]
Note that, in terms of the spherical coordinates associated with a point $z$,
\[
r:=|z|\in (0,\infty)\,,\qquad \om:= \frac z{|z|}\in\SS\,,
\]
the above condition reads simply as $r< 1+\ep B(\om)$. In the
domain~$\Ome$, Equation~\eqref{eq} reads in the rescaled
normal coordinates~$z$ at~$p$ as
\[
\hL u + \bla \hF(z,u)=0\,,
\]
where $\hF(z,u):= F(\exp_p^A(\ep z),u)$ and now the linear operator~$\hL$ is of the form
\[
\hL u := \ha_{ij}(\ep z)\, \pd_{z_i z_j}u + \ep \hb_i(\ep z)\,\pd_{z_i} u
\]
with $\ha_{ij}(Z)$ as above and some functions $\hb_i(Z)$ of class~$C^s$.

Therefore,
\[
\hL u = \De u  + \ep \hb_i(\ep z)\,\pd_{z_i} u+\cE u \,,
\]
where the error term is bounded as $\|\cE u\|_{C^{s-1}}\leq
C\ep^2\|u\|_{C^{s+1}}$ and $\hL u-\cE u$ is just like the
operator~$\tL u$ introduced below Equation~\eqref{eqz}. One can now go over the
proof of Theorem~\ref{T.1} and readily see that all the arguments
remain valid when one introduces an error of this form in the
expressions. This is not surprising, as the proof only uses the
formulas for the terms in the equations that are of zeroth and
first order in~$\ep$. Since the nondegenerate critical points of $f_0$ do not depend on the coordinate system, Theorem~\ref{T.1} is then proven for a general matrix-valued function~$A$.

\section*{Acknowledgements}

M.D.-V.\ is supported by the grants MTM2016-75897-P (AEI)
and ED431F 2017/03, and by the Ram\'on y Cajal program of the Spanish Ministry
of Science. A.E.\ is supported by the ERC Starting
Grant~633152. D.P.-S.\
is supported by the grants MTM PID2019-106715GB-C21 (MICINN) and Europa Excelencia EUR2019-103821 (MCIU). This work is supported in part by the
ICMAT--Severo Ochoa grant
SEV-2015-0554 and the CSIC grant 20205CEX001.

\bibliographystyle{amsplain}

\end{document}